\newcommand{\bs}[1]{\mbox{$\boldsymbol{#1}$}}
\newcommand{\sgn}{\mathrm{sgn}}
\newtheorem{theorem}{Theorem}[section]
\newtheorem{mainthm}{Theorem}
\newtheorem{lemma}[theorem]{Lemma}
\newtheorem{proposition}[theorem]{Proposition}
\newtheorem{definition}[theorem]{Definition}
\newtheorem{corollary}[theorem]{Corollary}
\title[Combinatorial Integration]
{A combinatorial integration on the Cantor dust}
\author[T. MARUYAMA]{Takashi MARUYAMA}
\address{Department of Engineering, Stanford University, 353 Jane Stanford Way Stanford CA 94305, USA}
\email{mtakashi279@cs.stanford.edu, 49takashi@gmail.com}
\author[T. SETO]{Tatsuki SETO}
\address{General Education and Research Center, Meiji Pharmaceutical University, 
2-522-1 Noshio, Kiyose-shi, Tokyo, Japan}
\email{tatsukis@my-pharm.ac.jp}
\subjclass[2020]{Primary 46L87; Secondary 28A80.}
\keywords{Fredholm module, Cantor dust, 
cyclic cocycle}
\newcommand{\tatsuki}[1]{{{\textcolor{red}{[Tatsuki: #1]}}}}
\begin{document}

\begin{abstract}

In this paper, we generalize the Cantor function to $2$-dimensional cubes and construct a cyclic $2$-cocycle on the Cantor dust. 
This cocycle is non-trivial on the pullback of the smooth functions on the $2$-dimensional torus with the generalized Cantor function while it vanishes on the Lipschitz functions on the Cantor dust. 
The cocycle is calculated through the integration of $2$-forms on the torus by using a combinatorial Fredholm module.


\end{abstract}

\maketitle

\section*{Introduction} 
Cyclic cohomology for algebras \cite{MR823176} is a fundamental tool to study noncommutative geometry. One of its application is the study of fractal sets to which powerful tools such as de Rham homology (on smooth manifolds) cannot be applied. $K$-homology, one of other homotopy invariant theories, of fractal sets is also studied extensively. For some class of fractal sets such as the Cantor set and the Cantor dust, the $K^{0}$ homology groups are isomorphic to $\displaystyle \prod^{\infty} \mathbb{Z}$, which is known as the Baer-Specker group \cite{MR1545974}. One feature of the Baer-Specker group is that the group does not admit basis.  This feature makes the study of $K^{0}$ groups for such fractal sets somewhat intractable because local topological features of the spaces cannot induce local algebraic structures. Cyclic cohomology on the one hand is expected to be favorable in these cases because it can be characterized as a ``linearization" of $K$-homology through the Chern character. 

There is a study \cite{talk:Moriyoshi2013} by H. Moriyoshi and T. Natsume which presented a variant of the Riemann-Stieltjes integration on the middle third Cantor set. 
Let $CS = \displaystyle \bigcap_{n=0}^{\infty}I_{n}$ be the 
middle third Cantor set, where 
$I_{0} = [0,1]$, $I_{1} = \left[ 0 , \dfrac{1}{3} \right]$, 
$I_{2} = \left[ 0 , \dfrac{1}{3^{2}} \right] \cup \left[ \dfrac{2}{3^{2}} , \dfrac{3}{3^{2}} \right] \cup \left[  \dfrac{6}{3^{2}} , \dfrac{7}{3^{2}} \right] \cup \left[ \dfrac{8}{3^{2}} , 1 \right]$, \dots.   
We also denote by $(H_{n} , F_{n})$ the Fredholm module on $I_{n}$ 
which is defined by the direct sum of Connes' Fredholm module on intervals. 
In \cite{talk:Moriyoshi2013}, a new class of algebra is introduced:
an algebra $\mathcal{P} = c^{\ast}BV(S^{1})$ defined by the pull-back of the bounded variation class on a unit circle $BV(S^{1})$ with the canonical Cantor function $c$. 
They defined a functional $\psi (f,g) = \displaystyle \lim_{n \to \infty}\psi_{n}(f,g) =  \lim_{n \to \infty} \mathrm{Tr}(\epsilon f [F_{n} , g]F_{n})$ on $\mathcal{P}$. 
The existence of the limit 
can be proved by showing that the cocycle is reduced to the Riemann-Stieltjes integration. 
A key ingredient for the proof is the Cantor function (and its intermediate functions that converge into the Cantor function); 
the function is used to map the iterated function system of Cantor sets onto $2$-fold subdivisions of the unit interval on which the Riemann-Stieltjes integration is defined. 
Because the Cantor function remains surjective onto the unit interval when the function is restricted to the Cantor set, the iterated function system for the Cantor set may be seen as a variant of subdivisions on $I$. 

In the present paper, by following the idea mentioned above, 
we construct a new cyclic $2$-cocycle on the Cantor dust $CD$ and show that the cocycle is not trivial. 
In order to construct the cocycle, 
we define a sequence of functionals 
$\phi_{n}$ (see Definition \ref{def:phi_n}),  
that is a generalization of a functional 
$\psi_{n}$ on $CS$,  
by using a combinatorial Fredholm module on squares defined by 
the authors \cite{arXiv:1912.05832} 
and 
use the product  of the Cantor function. 
The function   induces the map $\bs{c}$ between  
the Cantor dust and $2$-torus $\mathbb{T}^{2}$: 

\begin{mainthm}[see Theorem \ref{thm:index_thm}]
Let $C^{\infty}(\mathbb{T}^2)$ be the smooth functions on the torus $\mathbb{T}^{2}$. For $f =\bs{c}^{*}(\tilde{f}), g = \bs{c}^{*}(\tilde{g}), h = \bs{c}^{*}(\tilde{h}) \in \bs{c}^{*}C^{\infty}(\mathbb{T}^{2})$, 
we have 
\[
\lim_{n \to \infty} 
\phi_{n}(f,g,h) = 2\int_{\mathbb{T}^{2}} \tilde{f}d\tilde{g} \wedge d\tilde{h}.
\]
Moreover, the limit only depends on $f,g,h \in \bs{c}^{*}C^{\infty}(\mathbb{T}^{2})$. 
Therefore, the functional $\phi$ defined by the limit is a cyclic $2$-cocycle on 
$\bs{c}^{*}C^{\infty}(\mathbb{T}^{2})$. 
\end{mainthm}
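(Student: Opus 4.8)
The plan is to establish the three assertions in order, since the limit formula is the substantive one and the other two follow from it with little extra work. For the limit formula I would first unwind Definition \ref{def:phi_n}, writing $\phi_{n}(f,g,h)$ as a trace over the combinatorial Fredholm module on the level-$n$ squares and computing the commutators $[F_{n},g]$ and $[F_{n},h]$ explicitly for $g=\bs{c}^{*}(\tilde g)$, $h=\bs{c}^{*}(\tilde h)$. Because $F_{n}$ is assembled combinatorially from the grid of level-$n$ cells, each commutator is supported on the ``edges'' joining adjacent cells, and its matrix entries are the first differences of $\tilde g$, resp. $\tilde h$, across those edges — here one uses that $\bs{c}$ collapses each level-$n$ Cantor cell to a single dyadic cell of $\mathbb{T}^{2}$, so that at level $n$ one is looking at the $2^{n}\times 2^{n}$ dyadic subdivision of the torus of mesh $2^{-n}$. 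Expanding the trace then yields a finite sum indexed by these $4^{n}$ dyadic cells, each summand being a value of $\tilde f$ times a product of two transverse first differences of $\tilde g$ and $\tilde h$, with signs governed by the grading $\epsilon$.

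Next I would recognize this finite sum as a Riemann-type sum for $2\int_{\mathbb{T}^{2}}\tilde f\,d\tilde g\wedge d\tilde h$. Expanding each transverse difference by Taylor's theorem as $\partial_{i}\tilde g\cdot 2^{-n}+O(2^{-2n})$, resp. $\partial_{j}\tilde h\cdot 2^{-n}+O(2^{-2n})$, the antisymmetric combination that survives the grading signs reconstructs the Jacobian density $\partial_{1}\tilde g\,\partial_{2}\tilde h-\partial_{2}\tilde g\,\partial_{1}\tilde h$ of $d\tilde g\wedge d\tilde h$, weighted by the cell area $2^{-2n}$; the prefactor $2$ is the combinatorial multiplicity attached to each square by the Fredholm module of \cite{arXiv:1912.05832}. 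Summing over the $4^{n}$ cells gives exactly the Riemann sum for the integral, while the $C^{2}$-bounds on $\tilde f,\tilde g,\tilde h$ control the aggregate Taylor remainder by $O(4^{n}\cdot 2^{-3n})=O(2^{-n})\to 0$, so the sum converges to $2\int_{\mathbb{T}^{2}}\tilde f\,d\tilde g\wedge d\tilde h$.

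The main obstacle is precisely this convergence step. I must show that the genuinely non-local contributions to the trace — those in which the two commutators link cells that are not adjacent — together with the higher-order Taylor remainders, are summable and vanish as $n\to\infty$. This requires a uniform trace-norm estimate on $[F_{n},g]$ of order $2^{-n}$ per coordinate direction, i.e. an appropriate finite-summability of the combinatorial Fredholm module with the correct growth rate, so that the $4^{n}$ main terms of size $O(2^{-2n})$ sum to a bounded quantity while the error terms decay. The bookkeeping here is delicate because the self-similar triadic geometry of $CD$ (cells of diameter $3^{-n}$) sits behind the dyadic image subdivision of $\mathbb{T}^{2}$ (mesh $2^{-n}$), and one must track the two scalings carefully to see that the off-diagonal and remainder terms are subleading.

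Finally, the remaining two claims are short. Since $\bs{c}\colon CD\to\mathbb{T}^{2}$ is surjective, $\bs{c}^{*}$ is an injective algebra homomorphism, hence an isomorphism onto $\bs{c}^{*}C^{\infty}(\mathbb{T}^{2})$; therefore $\tilde f,\tilde g,\tilde h$ are uniquely determined by $f,g,h$ and the right-hand side is a well-defined functional of $f,g,h$, which is the ``moreover'' assertion. For the cocycle property I would check directly that $T(\tilde f,\tilde g,\tilde h)=\int_{\mathbb{T}^{2}}\tilde f\,d\tilde g\wedge d\tilde h$ is a cyclic $2$-cocycle on the commutative algebra $C^{\infty}(\mathbb{T}^{2})$: cyclicity $T(\tilde f,\tilde g,\tilde h)=T(\tilde h,\tilde f,\tilde g)$ follows from Stokes' theorem on the closed manifold $\mathbb{T}^{2}$ applied to $d(\tilde f\tilde h\,d\tilde g)$, and $bT=0$ follows from the Leibniz rule together with commutativity, the four terms of $bT$ cancelling in pairs. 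Transporting $2T$ through the isomorphism $\bs{c}^{*}$ then shows that $\phi$ is a cyclic $2$-cocycle on $\bs{c}^{*}C^{\infty}(\mathbb{T}^{2})$.
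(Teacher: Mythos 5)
Your proposal follows essentially the same route as the paper: expand $\phi_{n}$ into the explicit per-square trace of Lemma \ref{lem:rsint}, use the fact that $\bs{c}_{n}$ carries the $4^{n}$ level-$n$ Cantor cells onto the $2^{n}$-fold dyadic subdivision $\boxplus_{n}$ of $\mathbb{T}^{2}$, and identify the resulting sum, after first-order Taylor expansion of $\tilde{g},\tilde{h}$ at a corner of each dyadic cell, as a Riemann sum for $2\int_{\mathbb{T}^{2}}\tilde{f}\,d\tilde{g}\wedge d\tilde{h}$; your error estimate $O(4^{n}\cdot 2^{-3n})\to 0$ is the same bookkeeping as the paper's $4^{n}o(4^{-n})\to 0$.

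One correction: the step you single out as ``the main obstacle'' is vacuous in this construction. The operator $F_{n}$ is by definition a direct sum of one fixed $4\times 4$ block per square $\bs{s}\in S^{\times n}$, so $f[F,g][F,h]M$ is block-diagonal over the cells: there are no contributions linking distinct cells (adjacent or not), no trace-norm or finite-summability estimate is required, and each $\phi_{n}(f,g,h)$ is already an exact finite sum of the four local vertex terms of Lemma \ref{lem:rsint}. The only analytic content is the Riemann-sum convergence you already describe. On the other two assertions your route is in fact cleaner than the paper's: surjectivity of $\bs{c}:CD\to\mathbb{T}^{2}$ makes $\bs{c}^{*}$ injective, so the ``moreover'' is immediate (the paper instead argues that the finite sums for two preimages agree at every level), and deducing cyclicity by transporting the classical cyclic $2$-cocycle $(\tilde{f},\tilde{g},\tilde{h})\mapsto\int_{\mathbb{T}^{2}}\tilde{f}\,d\tilde{g}\wedge d\tilde{h}$ through this isomorphism supplies a detail the published proof leaves implicit.
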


We can take the value $\phi_{n}(f,g,h)$ for 
Lipschitz functions 
$f,g,h \in C^{\mathrm{Lip}}(CD)$ 
by the definition of $\phi_{n}$.  
However, we have $\displaystyle \lim_{n \to \infty}\phi_{n}(f,g,h) = 0$
for any 
$f,g,h \in C^{\mathrm{Lip}}(CD)$ (see Proposition \ref{prop:Lipschitz}).  
Thus the algebra  $\bs{c}^{*}C^{\infty}(\mathbb{T}^{2})$ is far different from  $C^{\mathrm{Lip}}(CD)$. 

All the main results to be shown also hold for the Sierpinski carpet.  $\mathbb{T}^{2}$ can be also replaced with a $2$-dimensional sphere. 
In order to generalize our main theorem to general dimension, 
we need a general method to prove the existence of a cyclic cocycle.  
We will leave the study for future work.





%

\section{A combinatorial Fredholm module on squares}
\label{sec:combfredmod}

In this section, we review a combinatorial Fredholm module on squares constructed by the authors \cite{arXiv:1912.05832}.  
Let $\gamma \subset \mathbb{R}^{2}$ be a square of dimension $2$ 
and $V =  \{v_{0} , \ v_{1}  , \ v_{2}  , \ v_{3} \}$ 
the set of vertices of $\gamma$;  
see the following figure for the numbering of the vertices.  

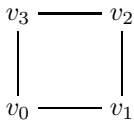
\begin{figure}[h] 
\label{fig:verticies} 
\[ \xymatrix{
v_{3} & v_{2} \ar@{-}[l] \\ 
v_{0} \ar@{-}[r] \ar@{-}[u] & v_{1} \ar@{-}[u]
} 
\] 
\caption{Numbering of the vertices} 
\end{figure}  

Set 
$V_{0} = \{ v_{0}, \ v_{2} \}$ and  
$V_{1} = \{ v_{1}, \ v_{3}\}$, 
so we have $V = V_{0} \cup V_{1}$. 
Set 
\begin{align*}
\mathcal{H}^{+} &= \ell^{2}(V_{0}) 
	= \ell^{2}(v_{0}) \oplus \ell^{2}(v_{2})  , & 
\mathcal{H}^{-} &= \ell^{2}(V_{1}) 
	= \ell^{2}(v_{1}) \oplus \ell^{2}(v_{3})  
\end{align*} 
and 
$\mathcal{H} = \mathcal{H}^{+} \oplus \mathcal{H}^{-}$. 
The vector space 
$\mathcal{H} (\cong \mathbb{C}^{4})$  is 
a Hilbert space of 
dimension $4$ with an inner product 
\[
\langle f , g \rangle 
= \sum_{i = 0}^{3} f(v_{i})\overline{g(v_{i})}. 
\]
We assume that $\mathcal{H}$ is $\mathbb{Z}_{2}$-graded 
with the grading $\epsilon = \pm 1$ on $\mathcal{H}^{\pm}$, respectively.  
The $C^{\ast}$-algebra 
$C(V)$  
of continuous functions on $V$   
acts on $\mathcal{H}$ by multiplication:  
\[
\rho(f) = (f(v_{0}) \oplus f(v_{2}) )    
	\oplus (f(v_{1}) \oplus f(v_{3})) .  
\] 

Set
$U = \dfrac{1}{\sqrt{2}}
\begin{bmatrix} 1 & -1 \\ 1 & 1 \end{bmatrix}$ 
and 
$F = 
\begin{bmatrix} & U^{\ast} \\ U & \end{bmatrix} 
=
\dfrac{1}{\sqrt{2}}\begin{bmatrix}
 & & 1 & 1 \\ 
 & & -1 & 1 \\ 
1 & -1 & & \\ 
1 & 1 & &  
\end{bmatrix}$. 
Then $F$ is a bounded operator on $\mathcal{H}$ and 
we have $F\epsilon + \epsilon F = O$. 
So $(\mathcal{H} , F)$ is the Fredholm module on $C(V)$. 

Set $I = [0,1] \times [0,1]$ and let 
$f_{s} : I \to I$ 
($s=1, \dots , N$) 
be similitudes.  
Denote by 
\[
r_{s} = \dfrac{\|f_{s}(x) - f_{s}(y)\|_{\mathbb{R}^{n}}}{\| x - y \|_{\mathbb{R}^{n}}} \;\,
(<1)
\quad (x \neq y) 
\]  
the similarity ratio of $f_{s}$. 
An iterated function system (IFS)
$(I, S = \{ 1, \dots , N \}, \{ f_{s} \}_{s \in S})$ 
defines the unique non-empty compact set 
$K = K(\gamma_{n}, S = \{ 1, \dots , N \}, \{ f_{s} \}_{s \in S})$ 
called the self-similar set 
such that 
$K = \bigcup_{s=1}^{N} f_{s}(K)$. 
Denote by $\dim_{S}(K)$ the similarity dimension of $K$, 
that is, the number $s$ that satisfies 
\[
\sum_{s=1}^{N}r_{s}^{s} = 1. 
\]
If an IFS
$(I, S, \{ f_{s} \}_{s \in S})$ 
satisfies 
the open set condition, we have 
$\dim_{H}(K) = \dim_{S}(K)$, 
where we denote by $\dim_{H}(K)$ 
the Hausdorff dimension of $K$. 

Set $f_{\bs{s}} = f_{s_{1}} \circ \dots \circ f_{s_{j}}$ for 
$\bs{s} = (s_{1}, \dots ,  s_{j}) \in S^{\infty} = \bigcup_{j=0}^{\infty} S^{\times j}$ and 
$f_{\emptyset} = \mathrm{id}$. 
For the simplicity, 
we will denote by $i$  
the vertex $f_{\bs{s}} (v_{i})$ 
of 
a square $f_{\bs{s}} (I)$. 
We also denote 
by $V_{\bs{s}}$ the vertices of a square $f_{\bs{s}} (I)$. 
Denote by $e_{\bs{s}}$ the length of edge of $f_{\bs{s}} (I)$, 
which equals $\prod_{k=1}^{j}r_{s_{k}}$.  
As introduced above, 
we set the Hilbert space $\mathcal{H}_{\bs{s}} = \ell^{2}(V_{\bs{s}})$ 
on $f_{\bs{s}} (I)$ of the length $e_{\bs{s}}$, 
which splits the positive part
$\mathcal{H}_{\bs{s}}^{+}$ 
and the negative part 
$\mathcal{H}_{\bs{s}}^{-}$. 
Taking direct sum on squares, we set as follows: 
\[ 
\mathcal{H}_{n} = \bigoplus_{j=1}^{n} \bigoplus_{\bs{s} \in S^{\times j}}\mathcal{H}_{\bs{s}}, 
\quad 
F_{n} = \bigoplus_{j=1}^{n} \bigoplus_{\bs{s} \in S^{\times j}} F
. 
\]  
The $C^{\ast}$-algebra $C(K)$ acts on $\mathcal{H}_{n}$ by multiplication:  
\[
\rho_{n}(f) =  \bigoplus_{j=1}^{n} \bigoplus_{\bs{s} \in S^{\times j}} \rho(f). 
\]


\section{Review on the Cantor dust}
\label{sec:cantordust}
We introduce an IFS of the Cantor dust and $2$-dimensional analogue of the Cantor function defined on $1$-dimensional interval. 
The IFS of the Cantor dust is our main interest in the paper. 
The analogue of the Cantor function plays a crucial role 
in construction of cyclic cocycle in Section \ref{sec:seccocycle}. 
Let $I = [0,1]\times[0,1]$. 
The IFS of the Cantor dust is defined as a set of functions $\{f_{s} : I \rightarrow \mathbb{R}^{2} \}_{s=1,2,3,4}$ described as follows:
\begin{align*}
    f_{1}(x) = \frac{1}{3}x, \ 
    f_{2}(x) = \frac{1}{3}x + \dfrac{1}{3}
    \begin{bmatrix}
        0 \\
        2
    \end{bmatrix},\ 
    f_{3}(x) = \frac{1}{3}x + \dfrac{1}{3}
    \begin{bmatrix}
           2 \\
           0
    \end{bmatrix},\ 
    f_{4}(x) = \frac{1}{3}x + \dfrac{1}{3} 
    \begin{bmatrix}
           2 \\
           2
    \end{bmatrix}.   
\end{align*}
This IFS induces a unique 
non-empty compact set $CD$ in $I$ such that 
$\displaystyle CD = \bigcup_{i=1}^{4} f_{s}(CD)$ holds; $CD$ is called the Cantor dust. 

Let $c_{0} = x: [0,1] \rightarrow \mathbb{R}$. We define in an inductive manner a sequence of $\mathbb{R}$-valued continuous functions defined on the unit interval $\{ c_{n} \}_{n \in \mathbb{N}}$ as follows:
\begin{equation*}
  c_{n+1}(x)=\begin{cases}
    \frac{1}{2}c_{n}(3x), & 0 \leq x \leq \frac{1}{3},\\
    \frac{1}{2}, & \frac{1}{3} \leq x \leq \frac{2}{3},\\
    \frac{1}{2}c_{n}(3x-2) + \frac{1}{2}, & \frac{2}{3} \leq x \leq 1.
  \end{cases}
\end{equation*}
The limit of $\{c_{n}\}_{n \in \mathbb{N}}$ exists and is called the Cantor function. 
We then generalize the construction to $2$-dimensional case by taking the pair of the two identical sequences:
\[
\bs{c}_{n} = (c_{n}, c_{n}): I \rightarrow \mathbb{R}^{2}.
\] 
The limit of the sequence $\{ \bs{c}_{n} \}$ also exists and 
equals the product of the Cantor functions. 
We call the limit $\displaystyle \lim_{n \to \infty} \bs{c}_{n}$ 
\textit{$2$-dimensional Cantor dust function}. 
We note that the construction of the $2$-dimensional Cantor dust function can be generalized to higher dimensional case.




\section{A combinatorial integration}
\label{sec:seccocycle}
\subsection{Approximating combinatorial integration}
\label{subsec:appcocycle}

We apply the construction of the combinatorial Fredholm module to the IFS of the Cantor dust $(I, S=\{1,2,3,4\}, \{ f_{s} \}_{s \in S})$ defined in Section \ref{sec:cantordust}. 
In order to construct our cyclic cocycle,  
we need the multiplication operator $\rho (f)$ 
and the commutator $[F,f]$ 
on $\mathcal{H}_{\bs{s}}$ 
for $f \in C(CD)$. 
$\rho (f)$ 
is even, so 
we can express 
$\rho(f) = \begin{bmatrix} 
f^{+} & \\ & f^{-} 
\end{bmatrix}$. 
We can write down the commutator $[F,f]$ 
as follows: 
\begin{equation*}
[F,f] = \frac{1}{\sqrt{2}}
\begin{bmatrix} 
 & & -\left( f(0) - f(1) \right) & -\left( f(0) - f(3) \right) \\ 
 & & f(2) - f(1) & -\left( f(2) - f(3) \right) \\ 
f(0) - f(1) & -\left(f(2) - f(1) \right) & & \\ 
f(0) - f(3) & f(2) - f(3) & & 
\end{bmatrix}. 
\end{equation*}
Here, for the simplicity, we denote $i = v_{i}$ the 
vertices on the squares $V_{\bs{s}}$. 
We denote the upper right $2\times 2$ block of $[F,f]$ by $d^{-}f$ and the lower left by $d^{+}f$.


We now construct a sequence of operators on $\displaystyle \bigoplus_{\bs{s} \in S^{\times n}} \mathcal{H}_{\bs{s}}$ that will give rise to a cyclic cocycle. 
For $f,g,h \in C(K)$, 
we have
\begin{align*}
f[F,g][F,h] &= 
\begin{bmatrix} f^{+} & \\ & f^{-} \end{bmatrix} 
\begin{bmatrix} & d^{-}g \\ d^{+}g & \end{bmatrix} 
\begin{bmatrix} & d^{-}h \\ d^{+}h & \end{bmatrix} \\ 
&= 
\begin{bmatrix} & f^{+}d^{-}g \\ f^{-}d^{+}g & \end{bmatrix} 
\begin{bmatrix} & d^{-}h \\ d^{+}h & \end{bmatrix} 
= 
\begin{bmatrix} 
f^{+}d^{-}gd^{+}h &  \\ 
& f^{-}d^{+}gd^{-}h
\end{bmatrix}.
\end{align*}
By setting $f_{i,j} = f(j) - f(i)$, the two diagonal $2 \times 2$ blocks of $f[F,g][F,h]$ can be expressed as 
\begin{align*}
f^{+}d^{-}gd^{+}h 
&= \frac{1}{2}
\begin{bmatrix}  
f(0) & \\ & f(2) 
\end{bmatrix}
\begin{bmatrix}
g_{0,1} & g_{0,3} \\ -g_{2,1} & g_{2,3} 
\end{bmatrix}
\begin{bmatrix} 
-h_{0,1} & h_{2,1} \\ -h_{0,3} & -h_{2,3} 
\end{bmatrix} \\ 
&= \frac{1}{2}
\begin{bmatrix} 
f(0)g_{0,1} & f(0)g_{0,3} \\ -f(2)g_{2,1} & f(2)g_{2,3} 
\end{bmatrix}
\begin{bmatrix} 
-h_{0,1} & h_{2,1} \\ -h_{0,3} & -h_{2,3} 
\end{bmatrix} \\ 
&= \frac{1}{2} 
\begin{bmatrix} 
-f(0)(g_{0,1}h_{0,1} + g_{0,3}h_{0,3}) & f(0)(g_{0,1}h_{2,1} - g_{0,3}h_{2,3}) \\ 
f(2)(g_{2,1}h_{0,1} - g_{2,3}h_{0,3}) & -f(2)(g_{2,1}h_{2,1} + g_{2,3}h_{2,3}) 
\end{bmatrix} \\ 
&= \frac{1}{2}
\begin{bmatrix} 
f(0)(g_{0,1}h_{1,0} + g_{0,3}h_{3,0}) & -f(0)(g_{0,1}h_{1,2} - g_{0,3}h_{3,2}) \\ 
f(2)( g_{2,3}h_{3,0} - g_{2,1}h_{1,0}) & f(2)(g_{2,1}h_{1,2} + g_{2,3}h_{3,2}) 
\end{bmatrix}  
\end{align*}
and
\begin{align*}
f^{-}d^{+}gd^{-}h 
&= \frac{1}{2} 
\begin{bmatrix}
f(1) & \\ & f(3) 
\end{bmatrix}
\begin{bmatrix}
-g_{0,1} & g_{2,1} \\ 
-g_{0,3} & -g_{2,3}  
\end{bmatrix}
\begin{bmatrix} 
h_{0,1} & h_{0,3} \\ 
-h_{2,1} & h_{2,3}
\end{bmatrix} \\ 
&= \frac{1}{2}
\begin{bmatrix} 
-f(1)g_{0,1} & f(1)g_{2,1} \\ 
-f(3)g_{0,3} & -f(3)g_{2,3} 
\end{bmatrix}
\begin{bmatrix} 
h_{0,1} & h_{0,3} \\ 
-h_{2,1} & h_{2,3}
\end{bmatrix} \\ 
&= \frac{1}{2} 
\begin{bmatrix} 
f(1)(-g_{0,1}h_{0,1} - g_{2,1}h_{2,1}) & f(1)(-g_{0,1}h_{0,3} + g_{2,1}h_{2,3}) \\ 
f(3)(-g_{0,3}h_{0,1} + g_{2,3}h_{2,1}) & f(3)(-g_{0,3}h_{0,3} - g_{2,3}h_{2,3}) 
\end{bmatrix} \\ 
&= \frac{1}{2} 
\begin{bmatrix} 
f(1)(g_{1,0}h_{0,1} + g_{1,2}h_{2,1}) & f(1)(g_{1,0}h_{0,3} - g_{1,2}h_{2,3}) \\ 
-f(3)(g_{3,2}h_{2,1} - g_{3,0}h_{0,1} ) & f(3)(g_{3,0}h_{0,3} + g_{3,2}h_{2,3}) 
\end{bmatrix}.
\end{align*}
Here, we define
\begin{equation*}
M = -\frac{2}{e^{2}}[F,x][F,y] 
= 
\begin{bmatrix} 
 & 1 & & \\ 
-1 & & & \\ 
 & & & 1 \\ 
 & & -1 &  
\end{bmatrix}.
\end{equation*}
$M$ can be expressed as $N \oplus N$ by denoting the off-diagonal $2\times2$ matrices by $N$. 
The following lemma indicates that 
the trace of an operator $f[F,g][F,h]M$ 
gives rise to a discretized version of 
integration on a square.

\begin{lemma}
\label{lem:rsint}
For any $n \in \mathbb{N}$ and $\bs{s} \in S^{\times n}$, 
we have 
\begin{align*}
2 \cdot \operatorname{Tr}(f[F,g][F,h]M) =& f(0)(g_{0,1}h_{1,2} - g_{0,3}h_{3,2})
+ f(2)(g_{2,3}h_{3,0} - g_{2,1}h_{1,0}) \\
\ &- f(1)(g_{1,0}h_{0,3} - g_{1,2}h_{2,3}) 
- f(3)( g_{3,2}h_{2,1} - g_{3,0}h_{0,1}).
\end{align*}
\end{lemma}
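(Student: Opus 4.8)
The plan is to exploit the block-diagonal structure that has already been made explicit in the computation preceding the lemma. The operator $f[F,g][F,h]$ is the direct sum of the two $2\times 2$ blocks $f^{+}d^{-}gd^{+}h$ and $f^{-}d^{+}gd^{-}h$ displayed above, while $M = N \oplus N$ is itself block-diagonal with $N = \begin{bmatrix} 0 & 1 \\ -1 & 0 \end{bmatrix}$ on each summand. Consequently $f[F,g][F,h]M = (f^{+}d^{-}gd^{+}h)N \oplus (f^{-}d^{+}gd^{-}h)N$, and since the trace of a block-diagonal operator is the sum of the traces of its blocks,
\[
\operatorname{Tr}(f[F,g][F,h]M) = \operatorname{Tr}\!\big((f^{+}d^{-}gd^{+}h)N\big) + \operatorname{Tr}\!\big((f^{-}d^{+}gd^{-}h)N\big).
\]

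First I would record the elementary identity that for any $2\times 2$ matrix $A = \begin{bmatrix} a & b \\ c & d \end{bmatrix}$ one has $AN = \begin{bmatrix} -b & a \\ -d & c \end{bmatrix}$, whence $\operatorname{Tr}(AN) = c - b = A_{21} - A_{12}$. In other words, right multiplication by $N$ followed by taking the trace simply extracts the antisymmetric combination of the two off-diagonal entries of $A$, and no diagonal entry survives. This is the single structural observation that drives the whole computation.

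Next I would read off the relevant off-diagonal entries directly from the final forms of the two blocks. From $f^{+}d^{-}gd^{+}h$ the combination $A_{21}-A_{12}$ equals $\tfrac{1}{2}\big[f(0)(g_{0,1}h_{1,2} - g_{0,3}h_{3,2}) + f(2)(g_{2,3}h_{3,0} - g_{2,1}h_{1,0})\big]$, while from $f^{-}d^{+}gd^{-}h$ it equals $-\tfrac{1}{2}\big[f(1)(g_{1,0}h_{0,3} - g_{1,2}h_{2,3}) + f(3)(g_{3,2}h_{2,1} - g_{3,0}h_{0,1})\big]$. Adding these two contributions and multiplying by $2$ reproduces exactly the right-hand side of the lemma.

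The computation carries no genuine obstacle; the only care required is sign bookkeeping when extracting the off-diagonal entries, which is why I would lean on the clean identity $\operatorname{Tr}(AN) = A_{21}-A_{12}$ rather than expanding the product by hand. Finally, I would explain why the identity holds for every $n$ and every $\bs{s} \in S^{\times n}$: on each $\mathcal{H}_{\bs{s}}$ the operators $F$ and $M$ have precisely the same matrix form, and the entire computation is expressed solely through the vertex values $f(i), g(i), h(i)$ at the four vertices $f_{\bs{s}}(v_{i})$ of the square $f_{\bs{s}}(I)$. Hence the displayed formula is valid verbatim on each square, independently of the choice of $n$ and $\bs{s}$.
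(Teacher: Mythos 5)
Your proof is correct and follows essentially the same route as the paper's: both exploit the block decomposition $f[F,g][F,h]M = (f^{+}d^{-}gd^{+}h)N \oplus (f^{-}d^{+}gd^{-}h)N$ and sum the traces of the two $2\times 2$ blocks, and your extracted off-diagonal combinations agree with the entries the paper computes explicitly. The only difference is cosmetic: you shortcut the final step with the identity $\operatorname{Tr}(AN) = A_{21}-A_{12}$ instead of writing out the full products $f^{+}d^{-}gd^{+}hN$ and $f^{-}d^{+}gd^{-}hN$ as the paper does, which is a cleaner way to do the same bookkeeping.
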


\begin{proof} The proof follows straightforward calculation. By multiplying $M$ with $f[F,g][F,h]$, we have 
\begin{align*}
f[F,g][F,h]M  
&= 
\begin{bmatrix} 
f^{+}d^{-}gd^{+}h &  \\ 
& -f^{-}d^{+}gd^{-}h 
\end{bmatrix}
\begin{bmatrix} 
N & \\ & N 
\end{bmatrix} \\ 
&= 
\begin{bmatrix} 
f^{+}d^{-}gd^{+}hN &  \\ 
& f^{-}d^{+}gd^{-}h \; N
\end{bmatrix}.
\end{align*}
Then we have 
\begin{align*}
f^{+}d^{-}gd^{+}hN 
&= \frac{1}{2}
\begin{bmatrix}
f(0)(g_{0,1}h_{1,0} + g_{0,3}h_{3,0}) & -f(0)(g_{0,1}h_{1,2} - g_{0,3}h_{3,2}) \\ 
f(2)( g_{2,3}h_{3,0} - g_{2,1}h_{1,0}) & f(2)(g_{2,1}h_{1,2} + g_{2,3}h_{3,2}) 
\end{bmatrix}
\begin{bmatrix}
 & 1 \\ -1 & 
\end{bmatrix} \\ 
&= \frac{1}{2}
\begin{bmatrix} 
f(0)(g_{0,1}h_{1,2} - g_{0,3}h_{3,2}) & f(0)(g_{0,1}h_{1,0} + g_{0,3}h_{3,0}) \\ 
-f(2)(g_{2,1}h_{1,2} + g_{2,3}h_{3,2}) & f(2)(g_{2,3}h_{3,0} - g_{2,1}h_{1,0} )
\end{bmatrix}  
\end{align*} 
and 
\begin{align*}
f^{-}d^{+}gd^{-}h \; N 
&= \frac{1}{2}
\begin{bmatrix} 
f(1)(g_{1,0}h_{0,1} + g_{1,2}h_{2,1}) & f(1)(g_{1,0}h_{0,3} - g_{1,2}h_{2,3}) \\ 
-f(3)( g_{3,2}h_{2,1} - g_{3,0}h_{0,1}) & f(3)(g_{3,0}h_{0,3} + g_{3,2}h_{2,3}) 
\end{bmatrix} 
\begin{bmatrix} 
 & 1 \\ -1 & 
\end{bmatrix} \\ 
&= \frac{1}{2} 
\begin{bmatrix} 
-f(1)(g_{1,0}h_{0,3} - g_{1,2}h_{2,3}) & f(1)(g_{1,0}h_{0,1} + g_{1,2}h_{2,1}) \\ 
-f(3)(g_{3,0}h_{0,3} + g_{3,2}h_{2,3}) & -f(3)(g_{3,2}h_{2,1} - g_{3,0}h_{0,1} )
\end{bmatrix}.
\end{align*}
Therefore the trace of $f[F,g][F,h]M$ is 
given by the lemma. 
\end{proof}



When $g = x$ and $h = y$ are the coordinate functions of $\mathbb{R}^{2}$, 
respectively, each term of the right hand side of Lemma \ref{lem:rsint} 
is nothing but the summand of the Riemannian sum of $f$. 
So the sum of the trace in the left hand side can be regard as an analogue of 
the Riemannian sum on the Cantor dust.

\begin{definition}
\label{def:phi_n}
For any $n \in \mathbb{N}$ and 
$f \otimes g \otimes h \in C(CD)\otimes C(CD) \otimes C(CD)$, define a map
\[
\phi_{n}(f,g,h) = \sum_{\bs{s} \in S^{\times n}}\operatorname{Tr}(f[F,g][F,h]M). 
\]
We call the sequence of the maps 
$\{ \phi_{n} \}$
the approximating combinatorial integration.
\end{definition}


\begin{proposition}
\label{prop:Lipschitz}
For any functions $f \in C(CD)$ and $g,h \in C^{\mathrm{Lip}}(CD)$, 
the sequence $\{ \phi_{n}(f,g,h) \}$ 
converges to $0$. 
\end{proposition}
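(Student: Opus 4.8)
The plan is to estimate each summand of $\phi_n$ directly from the closed form provided by Lemma \ref{lem:rsint}, and then to observe that the resulting sum over squares is a geometric series whose ratio is $4/9 < 1$. The only real content is a geometric bookkeeping: I must check that every product appearing in the trace decays like the \emph{square} of the edge length.

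First I would invoke Lemma \ref{lem:rsint}, which on a fixed square $f_{\bs{s}}(I)$ writes $2\operatorname{Tr}(f[F,g][F,h]M)$ as a sum of eight terms, each of the form $\pm f(v_a)\,g_{b,c}\,h_{d,e}$, where in every case the index pairs $(b,c)$ and $(d,e)$ join vertices adjacent along an edge of $f_{\bs{s}}(I)$ (never a diagonal). Before estimating, I would record that all these vertices lie in $CD$: the four corners of $I$ are the fixed points of the contractions $f_1,\dots,f_4$, hence belong to $CD$, and since $f_s(CD)\subseteq CD$ each vertex $f_{\bs{s}}(v_a)$ lies in $CD$, so $g$ and $h$ may be evaluated there and the Lipschitz hypothesis applies.

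Next I would apply the Lipschitz bounds. For $\bs{s}\in S^{\times n}$ the edge length of $f_{\bs{s}}(I)$ is $e_{\bs{s}}=3^{-n}$, so any two edge-adjacent vertices $p,q$ satisfy $\|p-q\|=3^{-n}$, whence $|g_{b,c}|\le L_g\,3^{-n}$ and $|h_{d,e}|\le L_h\,3^{-n}$ with $L_g,L_h$ Lipschitz constants of $g,h$ on $CD$. Each of the eight products is therefore bounded by $\|f\|_\infty L_g L_h\,9^{-n}$, and consequently
\[
\bigl|\operatorname{Tr}(f[F,g][F,h]M)\bigr|\le 4\,\|f\|_\infty L_g L_h\,9^{-n}
\]
for every $\bs{s}\in S^{\times n}$. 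Summing over the $|S^{\times n}|=4^n$ squares at level $n$ gives
\[
|\phi_n(f,g,h)|\le \sum_{\bs{s}\in S^{\times n}}\bigl|\operatorname{Tr}(f[F,g][F,h]M)\bigr|\le 4\,\|f\|_\infty L_g L_h\,\Bigl(\tfrac{4}{9}\Bigr)^{n},
\]
which tends to $0$ as $n\to\infty$.

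There is no substantial obstacle here; the estimate is elementary once the eight-term formula is in hand. The one point that must be verified carefully is exactly the first step, namely that each of the eight products pairs \emph{two} edge-length differences, each contributing a factor $3^{-n}$. This quadratic decay $9^{-n}$ per square is precisely what defeats the $4^n$ growth in the number of squares; if any term were only linear in $e_{\bs{s}}$, the series would not converge to $0$. The Lipschitz hypothesis is used solely to turn the vertex differences into the factors $3^{-n}$, and it is at this conversion that the argument would fail for merely continuous $g,h$.
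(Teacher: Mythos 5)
Your proposal is correct and follows essentially the same route as the paper: bound each summand of the trace formula from Lemma \ref{lem:rsint} by $\|f\|_\infty \mathrm{Lip}(g)\mathrm{Lip}(h)\,9^{-n}$ using that the relevant vertex pairs are edge-adjacent at distance $3^{-n}$, then sum over the $4^{n}$ squares to get a bound of order $(4/9)^{n}$. Your added check that the vertices $f_{\bs{s}}(v_a)$ actually lie in $CD$ (so the Lipschitz hypothesis applies) is a small point the paper leaves implicit, but the argument is otherwise identical.
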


\begin{proof} 
By Lemma \ref{lem:rsint}, we have  
\begin{align*}
\bigl| \phi_{n}(f,g,h) \bigr|
&\leq 
\sum_{\bs{s} \in S^{\times n}}
\Bigl( 
\bigl| 
f(0)(g_{0,1}h_{1,2} - g_{0,3}h_{3,2})
\bigr|
+ 
\bigl|
f(2)( g_{2,3}h_{3,0} - g_{2,1}h_{1,0})
\bigr| \\
&\hspace*{15mm}+  
\bigl|
f(1)(g_{1,0}h_{0,3} - g_{1,2}h_{2,3}) 
\bigr|
+ 
\bigl|
f(3)( g_{3,2}h_{2,1} - g_{3,0}h_{0,1}) 
\bigr| \Bigr) \\ 
&\leq 
2\mathrm{Lip}(g)\mathrm{Lip}(h)\dfrac{1}{9^{n}}\sum_{\bs{s} \in S^{\times n}}
\left( |f(0)| + |f(1)| + |f(2)| + |f(3)| \right) \\ 
&\leq 
8 \| f \|\mathrm{Lip}(g)\mathrm{Lip}(h)\dfrac{4^{n}}{9^{n}} 
\end{align*} 
for any $n \in \mathbb{N}$, 
$f \in C(CD)$, 
and 
$g,h \in C^{\mathrm{Lip}}(CD)$. 
Here, $\| f \|$ means the supnorm of $f$ 
and $\mathrm{Lip}(g)$ means 
the Lipschitz constant of $g \in C^{\mathrm{Lip}}(CD)$. 
Therefore, we have 
$\phi_{n}(f,g,h) \to 0$ ($n \to \infty$). 
\end{proof} 

By Proposition \ref{prop:Lipschitz}, 
our approximating combinatorial integration does not restore 
rich structure on the Lipschitz functions.  
So we need anothor class of functions on the Cantor dust. 

\subsection{Non-triviality of a combinatorial integration}
\label{subsec:phantomtorus}

Let $CD_{n} = \bigcup_{\bs{s} \in S^{\times n}}f_{\bs{s}}(I)$ 
be a space obtained by applying the $n$-times composition of $\displaystyle \bigcup_{s=1}^{4} f_{s}$ to $I$. 
$CD_{n}$ has $4^{n}$ distinct squares $\{\gamma^{n}_{i}\}_{1\leq i \leq4^{n}}$ with length of edge $\displaystyle \frac{1}{3^{n}}$. 
For every $n \in \mathbb{N}$, 
the restriction of $\bs{c}_{n}$ to $CD_{n}$ is still surjective. 
%
The image of $\{\gamma^{n}_{i}\}_{1\leq i \leq4^{n}}$ by $\bs{c}_{n}$ is then a subdivision of $I$ each of whose cell is a square with length $\displaystyle \frac{1}{2^{n}}$. 
Therefore, the maximum length across all the cells tends to $0$ as $n \to \infty$. 


We introduce an equivalence relation $\sim$ on $\partial I$: $(a,0) \sim (a,1)$ and $(0,b) \sim (1,b)$. The quotient space turns out to be the torus $\mathbb{T}^{2} = I/\sim$. 
The map 
$\bs{c}_{n}$ 
induces a surjective map 
$CD_{n} \to \mathbb{T}^{2}$.  
We denote the map by the same letter $\bs{c}_{n}$. 
Similarly, we can also construct 
a continuous function $\bs{c}: CD \rightarrow \mathbb{T}^{2}$  
by the restriction of the 2-dimensional Cantor dust function to $CD$. 
The following theorem shows that the limit of approximating combinatorial integration with nontrivial value 
exists for some class of function on $CD$. 


\begin{theorem}
\label{thm:index_thm}
Let $C^{\infty}(\mathbb{T}^2)$ be the smooth functions on the torus $\mathbb{T}^{2}$. For $f =\bs{c}^{*}(\tilde{f}), g = \bs{c}^{*}(\tilde{g}), h = \bs{c}^{*}(\tilde{h}) \in \bs{c}^{*}C^{\infty}(\mathbb{T}^{2})$, 
we have 
\[ 
\lim_{n \to \infty} 
\phi_{n}(f,g,h) = 2\int_{\mathbb{T}^{2}} \tilde{f}d\tilde{g} \wedge d\tilde{h}.
\]
Moreover, the limit only depends on $f,g,h \in \bs{c}^{*}C^{\infty}(\mathbb{T}^{2})$. 
Therefore, the functional $\displaystyle \phi(f,g,h) 
= 
\lim_{n \to \infty}  
\phi_{n}(f,g,h)$  is a cyclic $2$-cocycle on 
$\bs{c}^{*}C^{\infty}(\mathbb{T}^{2})$. 
\end{theorem}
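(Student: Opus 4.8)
The plan is to show that, for the special functions $f=\bs{c}^{*}\tilde f$, $g=\bs{c}^{*}\tilde g$, $h=\bs{c}^{*}\tilde h$, the quantity $\phi_n(f,g,h)$ is a Riemann sum for $2\int_{\mathbb{T}^2}\tilde f\,d\tilde g\wedge d\tilde h$ up to an error tending to $0$, and then to obtain the cocycle property by transporting the classical cyclic $2$-cocycle $\int\tilde a_0\,d\tilde a_1\wedge d\tilde a_2$ along $\bs{c}^{*}$. First I would fix the geometric dictionary. For $\bs{s}\in S^{\times n}$ the cell $f_{\bs{s}}(I)$ is a square of edge $3^{-n}$ whose four corners lie in $CD$, and $\bs{c}$ agrees there with $\bs{c}_n$; since $c_n$ carries the $2^n$ level-$n$ intervals monotonically onto the dyadic intervals $[k/2^n,(k+1)/2^n]$, the $4^n$ images $\bs{c}_n(f_{\bs{s}}(I))$ form a regular grid of squares of edge $\delta:=2^{-n}$ tiling $[0,1]^2$, which is $\mathbb{T}^2$ after the boundary identification. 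Hence the vertex data $f(i)$, $g_{i,j}$, $h_{i,j}$ entering Lemma~\ref{lem:rsint} are the value of $\tilde f$ and the edge increments of $\tilde g,\tilde h$ on one grid square, and the sum over $\bs{s}\in S^{\times n}$ is the sum over the grid.

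Next I would perform the local expansion on a single grid square with base vertex $v_0$ and edge $\delta$. Smoothness of $\tilde g,\tilde h$ gives $g_{0,1}=\delta\,\partial_x\tilde g+O(\delta^2)$, $g_{0,3}=\delta\,\partial_y\tilde g+O(\delta^2)$, and likewise for the remaining increments and for $\tilde h$, the $O(\delta^2)$ being uniform over $\mathbb{T}^2$ because the second derivatives are bounded. Substituting into Lemma~\ref{lem:rsint} and using $f(i)=\tilde f(v_0)+O(\delta)$, the four signed summands each reduce to $\tilde f(v_i)\,\delta^2\bigl(\partial_x\tilde g\,\partial_y\tilde h-\partial_y\tilde g\,\partial_x\tilde h\bigr)+O(\delta^3)$ (the two minus signs in the lemma cancelling the sign of their bracket), whence
\[
2\operatorname{Tr}\bigl(f[F,g][F,h]M\bigr)=4\,\tilde f\,\bigl(\partial_x\tilde g\,\partial_y\tilde h-\partial_y\tilde g\,\partial_x\tilde h\bigr)\,\delta^2+O(\delta^3).
\]

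Summing over the $4^n$ cells gives $\phi_n(f,g,h)=2\sum_{\bs{s}}\tilde f\,\bigl(\partial_x\tilde g\,\partial_y\tilde h-\partial_y\tilde g\,\partial_x\tilde h\bigr)\,\delta^2+O(2^{-n})$, where the main term is a Riemann sum for $2\int_{\mathbb{T}^2}\tilde f\,d\tilde g\wedge d\tilde h$ since $d\tilde g\wedge d\tilde h=\bigl(\partial_x\tilde g\,\partial_y\tilde h-\partial_y\tilde g\,\partial_x\tilde h\bigr)\,dx\wedge dy$ and the grid consists of $4^n$ squares of area $\delta^2=4^{-n}$; the accumulated error is $4^n\cdot O(8^{-n})=O(2^{-n})\to0$. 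This gives the limit formula. For the last assertion, surjectivity of $\bs{c}\colon CD\to\mathbb{T}^2$ makes $\bs{c}^{*}$ an injective algebra homomorphism, so $\tilde f,\tilde g,\tilde h$ are determined by $f,g,h$ and the limit depends only on $f,g,h$. The functional $\tau(\tilde a_0,\tilde a_1,\tilde a_2)=\int_{\mathbb{T}^2}\tilde a_0\,d\tilde a_1\wedge d\tilde a_2$ is a cyclic $2$-cocycle on $C^{\infty}(\mathbb{T}^2)$: cyclicity follows from $\int_{\mathbb{T}^2}d(\tilde a_0\tilde a_1\,d\tilde a_2)=0$, and the Hochschild coboundary vanishes by the Leibniz rule $d(\tilde a\tilde b)=\tilde a\,d\tilde b+\tilde b\,d\tilde a$. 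Because $\bs{c}^{*}$ is an algebra homomorphism and $\phi(f,g,h)=2\tau(\tilde f,\tilde g,\tilde h)$, the multiplicativity $\bs{c}^{*}(\tilde a\tilde b)=\bs{c}^{*}(\tilde a)\,\bs{c}^{*}(\tilde b)$ carries the cyclicity and Hochschild-cocycle identities from $\tau$ to $\phi$, so $\phi$ is a cyclic $2$-cocycle on $\bs{c}^{*}C^{\infty}(\mathbb{T}^2)$.

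The main obstacle I expect is the bookkeeping in the local expansion: one must verify that all four signed summands of Lemma~\ref{lem:rsint}, despite their different base vertices and signs, yield the same leading Jacobian $\delta^2\bigl(\partial_x\tilde g\,\partial_y\tilde h-\partial_y\tilde g\,\partial_x\tilde h\bigr)$, and that the $O(\delta)$ discrepancies among the corner values $\tilde f(v_i)$ are absorbed into a genuinely uniform $O(\delta^3)$. It is precisely this uniformity, guaranteed by compactness of $\mathbb{T}^2$ and boundedness of the derivatives of $\tilde f,\tilde g,\tilde h$, that makes the total error $O(2^{-n})$ vanish in the limit.
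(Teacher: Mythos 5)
Your proof of the limit formula is essentially the paper's own argument: transport the sum over the $4^n$ level-$n$ Cantor-dust squares to the dyadic $2^{-n}$-grid on $\mathbb{T}^2$ via $\bs{c}_n$, Taylor-expand the edge increments in Lemma \ref{lem:rsint} to first order, check that all four signed summands produce the same leading Jacobian term, and recognize the result as a Riemann sum for $2\int_{\mathbb{T}^2}\tilde f\,d\tilde g\wedge d\tilde h$ with a vanishing accumulated error (your per-cell $O(\delta^3)$ bound is a slightly sharper version of the paper's $o(4^{-n})$). You diverge from the paper in the two auxiliary claims, and in both cases your route is arguably cleaner. For well-definedness the paper argues that $\bs{c}^{*}\tilde p=\bs{c}^{*}\tilde q$ forces $\bs{c}_n^{*}\tilde p=\bs{c}_n^{*}\tilde q$ at the relevant vertices for every $n$, so that each $\phi_n$ already agrees; you instead observe that surjectivity of $\bs{c}\colon CD\to\mathbb{T}^2$ makes $\bs{c}^{*}$ injective, so the representatives $\tilde f,\tilde g,\tilde h$ are unique and there is nothing to check -- this is correct and shorter, though the paper's level-$n$ statement is the more informative one. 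For the cyclic-cocycle property the paper offers no argument at all (a direct verification survives only in a commented-out draft), whereas you derive it by pulling back the classical cyclic $2$-cocycle $\tau(\tilde a_0,\tilde a_1,\tilde a_2)=\int_{\mathbb{T}^2}\tilde a_0\,d\tilde a_1\wedge d\tilde a_2$ along the algebra homomorphism $\bs{c}^{*}$, using Stokes for cyclicity and Leibniz for the Hochschild identity; this actually fills a gap the paper leaves implicit.
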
 

\begin{proof}
Let $\boxplus_{n}$ be the set of equilateral cells that consist of $2^{n}$-fold subdivision of $I$. 
By the first-order approximation, 
for $\tilde{g}, \tilde{h} \in C^{\infty}(\mathbb{T}^{2})$ we have 
\begin{align*} 
\tilde{g}_{0,1} 
&= \tilde{g}(1) - \tilde{g}(0)
= 2^{-n}\tilde{g}_{x}(0) + o(2^{-n}) \quad (n \to \infty), \\ 
\tilde{h}_{1,2} 
&= \tilde{h}(2) - \tilde{h}(1) 
= 2^{-n} \tilde{h}_{y}(0)  + o(2^{-n}) \quad (n \to \infty), \\ 
\tilde{g}_{0,3} 
&= \tilde{g}(3) - \tilde{g}(0) 
= 2^{-n}\tilde{g}_{y}(0)  + o(2^{-n}) \quad (n \to \infty), \\ 
\tilde{h}_{3,2} 
&= \tilde{h}(2) - \tilde{h}(3) 
= 2^{-n}\tilde{h}_{x}(0)  + o(2^{-n}) \quad (n \to \infty). 
\end{align*}  
Apply the above equations to $\tilde{f}(0)(\tilde{g}_{0,1}\tilde{h}_{1,2} - \tilde{g}_{0,3}\tilde{h}_{3,2})$ in Lemma \ref{lem:rsint}, and we get 
\[
\tilde{f}(0)(\tilde{g}_{0,1}\tilde{h}_{1,2} - \tilde{g}_{0,3}\tilde{h}_{3,2}) 
= 
4^{-n}\tilde{h}(0)(\tilde{g}_{x}(0)\tilde{h}_{y}(0) - \tilde{g}_{y}(0)\tilde{h}_{x}(0)) 
+ o(4^{-n}) \quad (n \to \infty).
\]
Therefore, we have 
\begin{align*}
&\phantom{=}\lim_{n \to \infty} 
\sum_{\square \in \boxplus_{n}} 
\tilde{h}(0)(\tilde{g}_{0,1}\tilde{h}_{1,2} - \tilde{g}_{0,3}\tilde{h}_{3,2}) \\ 
&= \lim_{n \to \infty} 4^{-n} \sum_{\square \in \boxplus_{n}}  
\tilde{h}(0)(\tilde{g}_{x}(0)\tilde{h}_{y}(0) - \tilde{g}_{y}(0)\tilde{h}_{x}(0)) 
+ 4^{n}o(4^{-n})\\
&= 
\int_{\mathbb{T}^{2}}\tilde{f}(\tilde{g}_{x} \tilde{h}_{y} - \tilde{g}_{y}\tilde{h}_{x})dx \wedge dy\\
&= 
\int_{\mathbb{T}^{2}}\tilde{f}d\tilde{g} \wedge d\tilde{h} .
\end{align*}
Similary, we obtain  
\begin{align*} 
\lim_{n \to \infty} 
\sum_{\square \in \boxplus_{n}}\tilde{f}(2)(   \tilde{g}_{2,3}\tilde{h}_{3,0} - \tilde{g}_{2,1}\tilde{h}_{1,0}) 
&= 
-\lim_{n \to \infty} 
\sum_{\square \in \boxplus_{n}}\tilde{f}(1)(\tilde{g}_{1,0}\tilde{h}_{0,3} - \tilde{g}_{1,2}\tilde{h}_{2,3}) \\ 
= - \lim_{n \to \infty} 
\sum_{\square \in \boxplus_{n}}\tilde{f}(3)(   \tilde{g}_{3,2}\tilde{h}_{2,1} - \tilde{g}_{3,0}\tilde{h}_{0,1}) 
&= 
\int_{\mathbb{T}^{2}}\tilde{f}d\tilde{g} \wedge d\tilde{h} .
\end{align*} 
Hence, by Lemma \ref{lem:rsint}, we get 
\begin{align*}
\lim_{n \to \infty} \phi_{n}(f,g,h)
&= \frac{1}{2} \lim_{n \rightarrow \infty} \sum_{\bs{s} \in S^{\times n}} \operatorname{Tr}\left( \tilde{f} \circ \bs{c} \Bigl[ F,\tilde{g} \circ \bs{c} \Bigr] \left[ F,\tilde{h} \circ \bs{c}\right] M \right) \\
&= \frac{1}{2} \lim_{n \rightarrow \infty} \sum_{\square \in \boxplus_{n}} \operatorname{Tr}\left( \tilde{f}[F,\tilde{g}][F,\tilde{h}]M \right) \\
&= \frac{1}{2} \lim_{n \rightarrow \infty} \sum_{\square \in \boxplus_{n}}
\left( \tilde{f}(0)(\tilde{g}_{0,1}\tilde{h}_{1,2} - \tilde{g}_{0,3}\tilde{h}_{3,2}) \right. \\
&\hspace*{24mm} + \tilde{f}(2)(\tilde{g}_{2,3}\tilde{h}_{3,0} - \tilde{g}_{2,1}\tilde{h}_{1,0}) \\
&\hspace*{24mm} - \tilde{f}(1)(\tilde{g}_{1,0}\tilde{h}_{0,3} - \tilde{g}_{1,2}\tilde{h}_{2,3}) \\
&\hspace*{24mm} \left. - \tilde{f}(3)( \tilde{g}_{3,2}\tilde{h}_{2,1} - \tilde{g}_{3,0}\tilde{h}_{0,1} ) \right) \\
&= 2 \int_{\mathbb{T}^{2}} \tilde{f}d\tilde{g} \wedge d\tilde{h}.
\end{align*}

Assume that $f = \bs{c}^{*}(\tilde{f}) = \bs{c}^{*}(\tilde{f}^{\prime})$, 
$g = \bs{c}^{*}(\tilde{g}) = \bs{c}^{*}(\tilde{g}^{\prime})$ and 
$h = \bs{c}^{*}(\tilde{h}) = \bs{c}^{*}(\tilde{h}^{\prime})$. 
In general, if $\bs{c}^{*}(\tilde{p}) = \bs{c}^{*}(\tilde{q})$ for $\tilde{p}, \tilde{q} \in C(\mathbb{T}^{2})$, then $\bs{c}^{*}_{n}(\tilde{p}) = \bs{c}^{*}_{n}(\tilde{q})$ on the boundary of $CD_{n}$ for any $n \in \mathbb{N}$. Therefore, for every $n \in \mathbb{N}$, we have 
\begin{align*}
\sum_{\bs{s} \in S^{\times n}} \operatorname{Tr}\left( \tilde{f} \circ \bs{c} \Bigl[ F,\tilde{g} \circ \bs{c} \Bigr] \left[ F,\tilde{h} \circ \bs{c}\right] M \right)
& = 
\sum_{\bs{s} \in S^{\times n}} \operatorname{Tr}\left( \tilde{f}^{\prime} \circ \bs{c} \Bigl[ F,\tilde{g}^{\prime} \circ \bs{c} \Bigr] \left[ F,\tilde{h}^{\prime} \circ \bs{c}\right] M \right)
\end{align*} 
Thus as $n \to \infty$, 
the value $\phi (f,g,h)$ depends only on 
$f,g,h \in \bs{c}^{*}C^{\infty}(\mathbb{T}^{2}) $. 
\end{proof}


\begin{corollary}
\label{cor:pairing}
Given $[p] \in K_{0}(\bs{c}^{*}(C^{\infty}(\mathbb{T}^{2})))$ written as $p = e \circ \bs{c}: CD  \rightarrow M_{N}(\mathbb{C})$, 
the Connes' pairing of the cyclic $2$-cocycle $[\phi]$ 
with $[p]$ is expressed as follows:  
\[
\langle [\phi], [p] \rangle = \dfrac{1}{\pi i} \int_{\mathbb{T}^{2}} \operatorname{Tr}(e(de)^{2}).
\]
\end{corollary}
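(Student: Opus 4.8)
The plan is to deduce the formula from the general Connes pairing between cyclic cohomology and $K$-theory, feeding in the matrix-coefficient version of Theorem~\ref{thm:index_thm}. First I would recall that, for a cyclic $2$-cocycle $[\phi] \in HC^{2}(A)$ and a class $[e] \in K_{0}(A)$ represented by an idempotent $e = (e_{ij}) \in M_{N}(A)$, Connes' index pairing is
\[
\langle [\phi], [e] \rangle = \frac{1}{2\pi i}\,(\phi \# \operatorname{Tr})(e,e,e),
\]
where $\phi \# \operatorname{Tr}$ is the canonical extension of $\phi$ to $M_{N}(A)$, given on matrices by $(\phi \# \operatorname{Tr})(a^{0},a^{1},a^{2}) = \sum_{i,j,k} \phi(a^{0}_{ij}, a^{1}_{jk}, a^{2}_{ki})$. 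Because Theorem~\ref{thm:index_thm} shows that $\phi$ is a genuine cyclic $2$-cocycle on $A = \bs{c}^{*}C^{\infty}(\mathbb{T}^{2})$, this pairing is well-defined and homotopy invariant, so it descends to $K_{0}(A)$ and may be evaluated on the given representative $p = e \circ \bs{c}$.

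Next I would upgrade Theorem~\ref{thm:index_thm} from scalar functions to the entries of $p$. Writing $e = (e_{ij}) \in M_{N}(C^{\infty}(\mathbb{T}^{2}))$ and $p_{ij} = \bs{c}^{*}(e_{ij})$, the theorem applied entrywise gives $\phi(p_{ij}, p_{jk}, p_{ki}) = 2\int_{\mathbb{T}^{2}} e_{ij}\, de_{jk} \wedge de_{ki}$. Summing over $i,j,k$ and using $\operatorname{Tr}(e(de)^{2}) = \sum_{i,j,k} e_{ij}\, de_{jk} \wedge de_{ki}$, I obtain
\[
(\phi \# \operatorname{Tr})(p,p,p) = 2\int_{\mathbb{T}^{2}} \operatorname{Tr}(e(de)^{2}).
\]
The one point requiring care is that the $o(4^{-n})$ remainders in the proof of Theorem~\ref{thm:index_thm} stay under control as the three arguments range over the finitely many smooth entries $e_{ij}$; this holds uniformly since $N$ is finite and each $e_{ij}$ is smooth, so the first-order Taylor estimates apply without change.

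Combining the two displays then yields
\[
\langle [\phi], [p] \rangle = \frac{1}{2\pi i} \cdot 2\int_{\mathbb{T}^{2}} \operatorname{Tr}(e(de)^{2}) = \frac{1}{\pi i} \int_{\mathbb{T}^{2}} \operatorname{Tr}(e(de)^{2}),
\]
which is the asserted identity. The main obstacle, modest as it is, is fixing the normalization constant: one must confirm that the convention for $\langle HC^{2}, K_{0} \rangle$ in play carries exactly the factor $\tfrac{1}{2\pi i}$, equivalently that $\tfrac{1}{\pi i}\int_{\mathbb{T}^{2}} \operatorname{Tr}(e(de)^{2})$ is the expected multiple of the first Chern number of the bundle $\operatorname{im}(e) \to \mathbb{T}^{2}$. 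This is where the factor $2$ produced by the combinatorial construction of $\phi$ in Theorem~\ref{thm:index_thm} must be reconciled with the $\tfrac{1}{2\pi i}$ in the pairing; once that bookkeeping is done, the corollary follows formally from the already-established cocycle identity.
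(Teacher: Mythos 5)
Your proposal is correct and follows essentially the same route as the paper, which omits the proof but sketches exactly this computation: evaluate $\frac{1}{2\pi i}(\phi\#\operatorname{Tr})(p,p,p)$ using the matrix-entry extension of Theorem~\ref{thm:index_thm} to get $2\int_{\mathbb{T}^{2}}\operatorname{Tr}(e(de)^{2})$, hence the factor $\frac{1}{\pi i}$. Your added remarks on the uniformity of the Taylor remainders over the finitely many entries $e_{ij}$ and on pinning down the normalization constant are exactly the right points of care.
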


\vspace*{1\baselineskip}

\subsection*{Acknowledgments} Seto was supported by JSPS KAKENHI Grant Number  21K13795.

\bibliographystyle{plain}
\bibliography{ref_applications}

\end{document}